\newtheorem{theorem}{Theorem}[section]
\newtheorem{corollary}{Corollary}[theorem]
\newcommand{\keywords}[1]{\textbf{\textit{Keywords:}} #1}
\begin{document}

\title{Graph partition method based on finite projective planes
}


\author{Oleg Kruglov\footnote{was Huawei employee while working on this project} \and
	Anna Mastikhina\footnote{Corresponding author,\\ mastikhina.anna.antonovna@huawei.com} \and
	Oleg Senkevich \and
	Dmitry Sirotkin \and
	Stanislav Moiseev*\\
	Huawei Technologies Co., Ltd.
}

\maketitle

\begin{abstract}
We present a novel graph partition algorithm with a theoretical bound for the replication factor of $\sqrt{n}$, which improves known constrained approaches (grid: 2$\sqrt{n}$-1, torus: 1.5$\sqrt{n}$+1) and provides better performance.
\end{abstract}

\keywords{graph partition, finite projective plane, replication factor, distributed computing, graph algorithms}

\label{seq:introduction}

Recently, the use of large graphs has become increasingly popular in various real-world applications, including web and social network services, as well as machine learning and data mining algorithms. In order to perform distributed graph computing on clusters with multiple machines or servers, the entire graph must be partitioned across these machines. The quality of the graph partitioning significantly impacts the performance of any algorithm operating on the graph.

In practice, graphs are often distributed across multiple machines to optimize storage and improve computation. However, the way in which the graph is partitioned can significantly impact load balance and the number of interactions between machines, which in turn affects runtime performance. Unfortunately, achieving a perfect partition without any data duplication is typically impossible.

The edge-cut partitioning method has been commonly used in the past. It was effective in computational grids and used to solve differential equations. However, finding the optimal solution to minimize the edge-cut in a graph is an NP-hard problem, even for the simplest bisection subcase. Therefore, heuristic approaches have been used since the 1970s (Klin, 1973). In contrast, vertex partitioning has been found to be much more effective for real-world graphs with skewed distributions, such as those with power-law distributions. Therefore, this paper will focus on the vertex-cut approach.

There are numerous partitioning algorithms \\
described in the literature and implemented in frameworks that, when used in conjunction with graph algorithms, demonstrate significant performance improvements (\cite{adwise}, \cite{dbh}, \cite{hdrf}, \cite{hep}). Also, there exist various graph processing systems available that not only provide standard graph analytics algorithms but also incorporate built-in partitioning algorithms. Examples of such systems include PowerGraph and PowerLyra \cite{powerlyra} families, GraphX, GraphBuilder, and the Pregel family.

In this paper, we will use GraphX for partition quality evaluation. GraphX is a distributed graph engine that is developed using Spark (\cite{graphx}). It enhances Resilient Distributed Graphs (RDGs), which are used to associate records with vertices and edges. GraphX uses a flexible vertex-cut partitioning approach to encode graphs as horizontally partitioned collections by default. The number of partitions is equivalent to the number of blocks (64 Mb) in the input file. GraphX is widely used for graph analytics both in research and the industry. More specifically, we will use algorithms based on a GraphX Pregel API for the method evaluation \cite{graphx_cost}.

Partitioning methods are typically designed to improve certain metrics, with the motivation behind them being taken from real-world scenarios. The most common and important metrics are:

\begin{enumerate}
	
	\item Balance
	\begin{equation}\label{eq:balance}
		B = \frac{\max_{i} \mid E_i \mid}{\frac{\mid E\mid}{m}}
	\end{equation}
	\item Replication Factor (RF)
	\begin{equation}\label{eq:rf}
		RF=\sum_{i=1}^{m} \frac{\mid V(E_i )\mid}{\mid V\mid}
	\end{equation}
\end{enumerate}

Balance value refers to how evenly the edges are
 distributed among machines in a graph partitioning scheme. An ideal balance is achieved when each machine has an equal number of edges, resulting in a balance value of 1.0. The replication factor, on the other hand, measures the average number of vertex replicas in the partitioning. Minimizing the replication factor reduces the amount of communication between machines. However, the extent of improvement achieved by minimizing the replication factor depends on various factors, including the specific algorithm being used, the software implementation, the structure of the cluster, the characteristics of the graph, and other considerations. Consequently, there is no single algorithm that can universally optimize these metrics for all scenarios.

In addition, it should be noted that partition methods aimed at achieving better partition quality (for example, \cite{ne}) often involve more complex calculations. While these methods may yield improved partitioning results, they also tend to be more time-consuming. This becomes particularly important in scenarios where the partitioning process needs to be performed before each application run. In such cases, the overall computation pipeline can be significantly affected by the time required for the partitioning step. Consequently, using complicated graph partition algorithms may not be feasible in scenarios where the partitioning process is an integral part of the entire computation pipeline.

Hash-based methods, also known as hash-based partitioning, offer a time-efficient alternative to graph partitioning. These methods rely on pre-aggregated information about the edges and their endpoints to determine the partition assignment. While they may utilize less detailed information about the graph structure compared to other approaches, they can still achieve a reduction in vertex replications. The advantage of hash-based methods lies in their ability to perform the partitioning process quickly, making them suitable for scenarios where time efficiency is crucial.

This article presents a novel approach to graph partitioning called finite-geometry-based hash-based partitioning. Our method introduces advancements in both the theoretical bound of the replication factor and the overall performance acceleration. 

\subsection*{Article structure}

The article is structured as follows: Chapter \ref{seq:constrained} will discuss the so-called constrained approach for graph partitioning and the current state-of-the-art. Chapter \ref{seq:FPP} will introduce a novel partitioner called FPP partitioner. Chapter \ref{seq:empirical_results} will provide evaluation of this method and chapter \ref{seq:conclusion} is a conclusion

\section{Constrained approach for the graph partitioners} \label{seq:constrained}

\subsection{Finite geometries and finite projective planes}

Finite geometry is a branch of mathematics that deals with geometrical objects and spaces but with a finite number of elements. One of the key aspects of finite geometry is the study of finite planes, which are mathematical structures that possess a finite number of points and lines. In this article, we focus on a specific type of finite plane: the projective plane.

Projective geometry is a type of geometry that is concerned with properties that are invariant under projective transformations. Projective transformations,\\
 also known as homographies, are mappings that preserve incidence and collinearity. In projective geometry, parallel lines intersect at a point called the "point at infinity," which results in the addition of a new point to the geometry. A projective plane is a mathematical structure that consists of points and lines, with the following properties:

\begin{itemize}
	\item Any two distinct points lie on exactly one line.
	\item Any two distinct lines intersect at exactly one point.
	\item There exists a set of four points, no three of which are collinear.
\end{itemize}

In a finite projective plane of order n, there are $n^2 + n + 1$ points and $n^2 + n + 1$ lines. Each line contains $n + 1$ points, and each point is incident with $n + 1$ lines. Moreover, any two distinct lines intersect at exactly one point, and any two distinct points are connected by exactly one line. The number $n$ is referred to as the \textit{order} of the corresponding projective plane. For all known finite projective planes its order $n$ is a prime power.

Consider a 3-dimensional vector space $V$ over the finite field $F_q$, where $q$ is a prime power $p^k$. In this vector space, there are $q^3$ elements.

The projective plane $P^2$ can be defined as the set of one-dimensional lines in $V$. To represent points on the projective plane, we use homogeneous coordinates in the form of a tuple $(u_0, u_1, u_2)$, where at least one element should be a non-zero one. This tuple represents a set of points on the corresponding line, given by all scalar multiples of $(u_0, u_1, u_2)$, where the scalar $\lambda$ is non-zero.

If we assume $u_0 \in F_q$ is non-zero, then every vector \\ $(u_0, u_1, u_2)$ is collinear with a vector of the form\\
 $(1, v_1, v_2)$. There are $q^2$ such vectors. Similarly, every vector $(0, u_0, u_1)$ is collinear with a vector of the form $(0, 1, v_1)$. There are $q$ such vectors. Lastly, every vector $(0, 0, u_0)$ is collinear with the vector $(0, 0, 1)$. There is only one such vector. From these observations, we can conclude that any finite projective plane over $F_q$ contains $q^2 + q + 1$ points.

Each line in the projective plane can be defined by a point $u$, and it contains all points $v$ such that their scalar product in $V$ (given by $v_1u_1 + v_2u_2 + v_3u_3$) equals zero.

Using the same reasoning, it can be proven that there are also $q^2 + q + 1$ projective lines in $P^2$. Furthermore, each line in the projective plane contains exactly $q + 1$ points. Lastly, any two lines in $P^2$ intersect at exactly one point, as the intersection of two distinct two-dimensional planes in $V$ corresponds to a single one-dimensional line (point in $P^2$).

For further information and a more detailed analysis of projective planes, you can refer to the $\cite{jkarl}$.

Finite geometries, such as projective planes, find applications in various tasks such as packing \cite{packing} and partitioning problems. These constructions can be utilized to establish connections between edge and vertex assignments. For instance, a mapping can be defined from vertices to lines in the projective plane, and edges can be assigned based on the intersection index of the lines corresponding to the incident vertices. This mapping ensures that each intersection is nonempty, allowing for a valid assignment.

Additionally, replication can be evaluated using this approach. In this partitioning scheme, a vertex cannot be assigned to more partitions than the number of points on a line. This limitation helps control the replication factor.

This approach will be discussed in detail in section \ref{seq:FPP}.

\subsection{Constrained approach}

Among the various partitioning methods available in the literature, our focus in this study will be on a specific subset known as \textit{constrained methods} \cite{torus}. These methods have gained attention due to their ability to provide an upper bound on the replication factor. They are referred to as "constrained" because they guarantee the upper bound of the replication factor.

In general, the constrained partitioning methods follow the same schema.

To illustrate the partitioning process, we start with a set of partitions denoted as $S$, where the cardinality of $S$ is $n$. Our goal is to find subsets $S_i$ that satisfy the condition $S_i \cap S_j \neq \emptyset$.

Each partition can be represented by a mapping function $\psi: V \rightarrow \{S_i\}$, where $V$ represents the set of vertices in the graph. For any given edge $(u, v)$, we consider the subsets $\psi(u)$ and $\psi(v)$. It is assumed that these subsets have non-empty intersections, allowing us to choose a partition for this edge from the intersection.

The size of each subset $\mid S_i\mid$ determines the number of replications, as the edges incident to a vertex $v$ with $\psi(v) = S_i$ cannot be assigned to more partitions than the number of elements in $S_i$.

\begin{figure}
	\centering
	\includegraphics[width=0.45\textwidth]{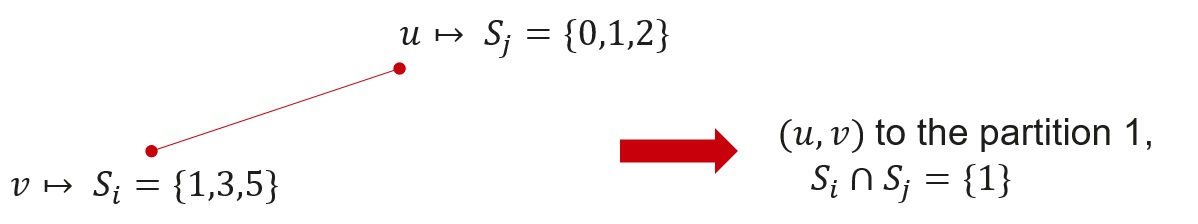}
	\caption{\label{fig:constr}Constrained approach}
\end{figure}

\subsubsection{EdgePartition2D}

One of the state-of-the-art methods that we consider as a baseline for comparison is the EdgePartition2D partitioner. This partitioning technique is implemented as "EdgePartition2D" in GraphX and referred to as the "grid" partitioner in PowerGraph. The EdgePartition2D method guarantees a replication factor of less than $2\sqrt{n}-1$, where $n$ represents the number of partitions. This serves as a benchmark against which we can evaluate the performance of our proposed method.

We can partition the adjacency matrix of the graph by splitting it into equally sized square blocks. The number of blocks is denoted as $n$, and they are arranged in $\sqrt{n}$ rows and $\sqrt{n}$ columns. Each block represents a partition, and the edges that are located within the same block belong to the corresponding partition. Specifically, for an edge $(i, j)$, belongs to the partition determined by the index of the block in which the matrix element $(i, j)$ is situated.

In cases where the desired number of partitions is not a perfect square, some partitions may be merged together to achieve the desired number. This ensures that all edges are assigned to a partition and that the partitions are as balanced as possible. This method is hash-based since it distributes the edge based on the hashes of its ending vertices.

\begin{figure}[h]
	\centering
	\includegraphics[width=0.45\textwidth]{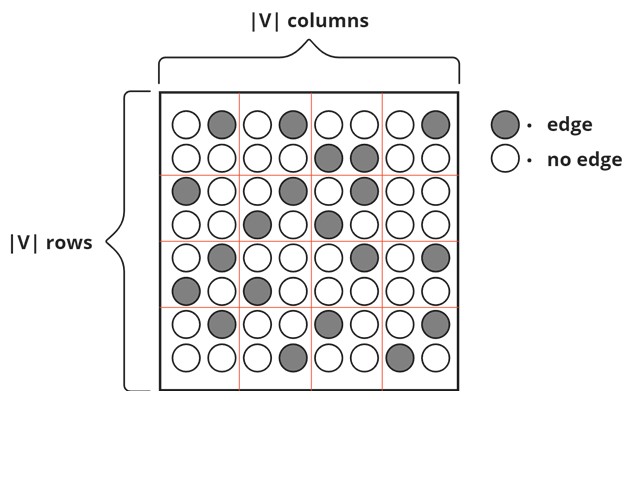}
	\caption{\label{fig:e2d_1} Edge2D partition: edge distribution}
\end{figure}

Let us say that set of blocks $S_i$ consists of $i$-th line and $i$-th column for the blocks in the adjacency matrix. We denote a set of indexes of blocks from $S_i$ as $s_i$. Overall, each of the sets $S_i$ contains $2\sqrt{n} - 1$ blocks and correspondingly each of the sets $s_i$ contains $2\sqrt{n} - 1$ numbers. Edge incident to vertices from $i$-th row of blocks and $j$-th column of blocks are assigned to the partition index from the intersection of $s_i$ and $s_j$.

Therefore we can obtain a well-known theorem

\begin{theorem}
	The replication factor of EdgePartition2D is bounded by  $2\sqrt{n}-1$.
\end{theorem}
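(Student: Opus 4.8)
The plan is to bound the replication factor by counting, for each vertex, the maximum number of partitions to which its incident edges can be assigned, and then summing via the definition \eqref{eq:rf}. The key observation is already set up in the excerpt: a vertex $v$ lands in some row-block $i$ (equivalently some column-block), and every edge incident to $v$ gets a partition index from an intersection $s_i \cap s_j$, all of which lie inside the fixed set $s_i$. Hence all replicas of $v$ are confined to the blocks in $S_i$, and $|s_i| = 2\sqrt{n} - 1$ bounds the number of partitions containing a copy of $v$.

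First I would make precise the assignment rule: index the $n$ blocks by pairs $(a,b)$ with $a,b \in \{0,\dots,\sqrt{n}-1\}$, let a vertex $w$ hash to row $r(w)$ and column $c(w)$, and for an edge $(u,v)$ define its block to be the (nonempty, by construction) intersection element of $s_{r(u)}$ and $s_{c(v)}$ — here $S_i$ is the "cross" consisting of row $i$ together with column $i$, so $|s_i| = \sqrt{n} + \sqrt{n} - 1 = 2\sqrt{n}-1$. Second I would fix a vertex $v$ and argue that for any incident edge, whether $v$ plays the role of the row-endpoint or the column-endpoint, the resulting block index lies in $s_{r(v)}$ (respectively $s_{c(v)}$); choosing the hash so that $r(v)$ and $c(v)$ are tied to the same index $i(v)$, all replicas of $v$ sit among the $2\sqrt{n}-1$ blocks of $S_{i(v)}$. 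Third I would substitute: $|V(E_i)|$ summed over $i$ equals $\sum_{v} (\text{number of partitions holding a copy of } v) \le \sum_{v} (2\sqrt{n}-1) = (2\sqrt{n}-1)|V|$, so dividing by $|V|$ gives $RF \le 2\sqrt{n}-1$.

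The main obstacle I expect is bookkeeping rather than depth: one must treat the two endpoints of an edge asymmetrically (one contributes its row-set, the other its column-set) and then reconcile this with a \emph{single} constraining set $S_{i(v)}$ per vertex. This is handled by the standard device of using one hash function $h(v)$ and setting $r(v) = c(v) = h(v)$, so the row-cross and column-cross of $v$ coincide as the set $S_{h(v)}$; then every edge at $v$, in either role, is assigned inside $S_{h(v)}$. A secondary nuisance is the non-perfect-square case, where blocks are merged — but merging only identifies partitions, which can never increase $|V(E_i)|$ summed appropriately, so the bound is preserved (and one may simply assume $n$ is a perfect square for the clean statement). Finally I would note the bound is essentially tight when hashes are spread out, which is why $2\sqrt{n}-1$ appears rather than something smaller, motivating the improvement claimed later in the paper.
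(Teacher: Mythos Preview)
Your proposal is correct and follows exactly the paper's own argument: fix a vertex, observe that every incident edge is assigned to a block in the cross $S_i$ of size $2\sqrt{n}-1$, and conclude the per-vertex replication bound. The paper's proof is a two-sentence version of what you wrote; your extra bookkeeping (the hash $r(v)=c(v)$, the merge-in-the-non-square case, the tightness remark) is sound but not needed for the bare statement.
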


\begin{proof}
	Consider one vertex from $i$-th block. All the \\ edges which are adjacent to this vertex can only be contained in the $i$-th row of blocks and $i$-th column of blocks (e.g. inset $S_i$). So any single vertex can’t be repeated more than $\mid S_i\mid = 2\sqrt{n}-1$ times.
\end{proof}

\begin{figure}[h]
	\centering
	\includegraphics[width=0.45\textwidth]{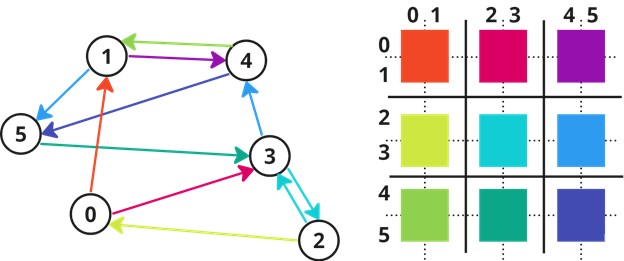}
	\caption{\label{fig:e2d_2} Edge2D graph partition}
\end{figure}

\subsubsection{Torus}

Another example of a constrained approach is a Torus-based edge partition (\cite{torus}).

Let's consider a torus-like grid, which resembles the structure of the EdgePartition2D method and consists of $n$ blocks. Each block of the grid corresponds to a partition in the graph. We define $n$ subsets $S_i$, where each subset contains approximately $r \approx 1.5\sqrt{n}+1$ blocks.

Each subset $S_i$ is composed of a complete column and half of a row in the grid. In the case where the row extends beyond the left side of the grid, it continues on the right side, taking into account the torus-like nature of the grid. An example of such a subset, denoted as $S_{13}$, is shown in Figure \ref{fig:torus_part}.

\begin{figure}[h]
	\centering
	\includegraphics[width=0.45\textwidth]{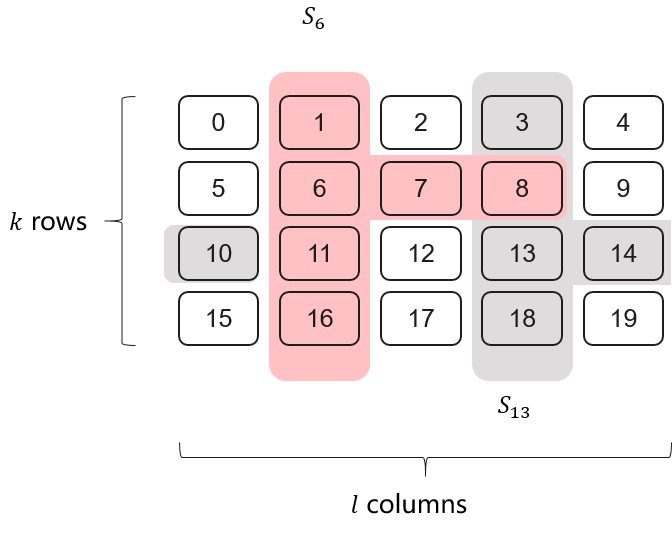}
	\caption{\label{fig:torus_part}Torus-based partition}
\end{figure}

We define a mapping function $\psi: V \to \{S_i\}$ that assigns each vertex $v$ to one of the subsets $S_i$. For any given edge $(u, v)$, we consider the two subsets $\psi(u)$ and $\psi(v)$ to which the vertices $u$ and $v$ are assigned. Since these subsets have a non-empty intersection, we can choose the partition for this edge from the common blocks in the intersection.

It is worth noting that the maximal replication factor for this partitioning method is approximately $r \approx 1.5\sqrt{n}+1$, assuming that $k=l=\sqrt{n}$. This replication factor is lower than the approximate replication factor achieved by the EdgePartition2D method. So, the replication factor has an upper bound $1.5\sqrt{n}+1$.

\subsection{An improvement of a current theoretical bound for the replication factor in the constrained approach}

In this section, we will establish a theoretical upper bound for the replication factor for the constrained approach. 

\begin{theorem}\label{th:repl_bound}
	Consider the set $S$ of the size $n$.\\
	 Let $S_1, ..., S_n$ be subsets of $S$ such that, $S_i \cap S_j \ne \emptyset$ for each $i,j \in \{1,...,n\}, i \neq j$ and for every $s\in S$ there exist no more than $r$ indices $i_1, ..., i_r$ such that $s \in S_{i_k}, k=1,...r$.
	
	Then $r \ge \sqrt{n}$
\end{theorem}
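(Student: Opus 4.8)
The plan is to count incidences between elements of $S$ and the subsets $S_i$ in two ways, and to exploit the pairwise-intersection condition. Let me write $d_s$ for the number of indices $i$ with $s \in S_i$; by hypothesis $d_s \le r$ for every $s \in S$. The key observation is that each pair of distinct indices $\{i,j\}$ contributes at least one common element, so if I sum $\binom{d_s}{2}$ over all $s \in S$ I am counting, for each unordered pair $\{i,j\}$, the number of elements lying in $S_i \cap S_j$, which is at least $1$. Hence
\begin{equation*}
\sum_{s \in S} \binom{d_s}{2} \;\ge\; \binom{n}{2}.
\end{equation*}

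Next I would bound the left-hand side from above using $d_s \le r$: since $\binom{d_s}{2} \le \binom{r}{2}$ is too crude on its own (it would only give something after also controlling $\sum_s d_s$), the cleaner route is to also note $\sum_{s\in S} d_s = \sum_{i=1}^n |S_i|$ and that $\binom{d_s}{2} = \tfrac{d_s(d_s-1)}{2} \le \tfrac{(r-1)d_s}{2}$. Combining,
\begin{equation*}
\frac{r-1}{2}\sum_{i=1}^n |S_i| \;\ge\; \sum_{s\in S}\binom{d_s}{2} \;\ge\; \frac{n(n-1)}{2}.
\end{equation*}
Now each $|S_i| \le r$ as well (a single subset $S_i$ cannot contain an element $s$ more than once, and... actually one must argue $|S_i|\le r$ separately: every $s \in S_i$ has $d_s \ge 1$, which does not bound $|S_i|$ directly). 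The right bound on $|S_i|$ comes from the replication hypothesis applied differently, or one simply uses $|S_i| \le |S| = n$ — but that is too weak. Instead I expect the intended argument fixes one index, say $S_1$, and observes that the $n-1$ sets $S_2,\dots,S_n$ each meet $S_1$, so by pigeonhole some element of $S_1$ lies in at least $\lceil (n-1)/|S_1|\rceil$ of them, forcing $r \ge 1 + (n-1)/|S_1| \ge 1 + (n-1)/r$, i.e. $r(r-1) \ge n-1$, which already gives essentially $r \ge \sqrt{n}$ up to rounding, and with a slightly more careful count (including that $s$ itself is in $S_1$) one gets $r \ge \sqrt n$ cleanly.

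The main obstacle is getting the constant exactly right to land at $r \ge \sqrt{n}$ rather than $r \ge \sqrt{n} - O(1)$: the pigeonhole step naturally produces $r^2 - r + 1 \ge n$, so I would need to check that this is equivalent to (or implies) $r \ge \sqrt n$ for all relevant integer values of $n$ — which it is, since $r^2 \ge r^2 - r + 1 \ge n$ gives $r \ge \sqrt n$ directly. So in fact the pigeonhole argument on a single fixed $S_i$ is the whole proof: fix $i$, count the $n-1$ required intersections with the other subsets against the at-most-$r$ "slots" each element of $S_i$ provides, conclude $|S_i| \cdot (r-1) \ge n-1$ together with $|S_i| \le r$ (which holds because... each element of $S_i$ is counted once, and $S_i$ itself is one of the $r$ subsets through each of its points, leaving $r-1$ others — wait, this gives $|S_i|(r-1)\ge n-1$ but not $|S_i|\le r$). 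I would therefore present it as: there exists $s \in S_i$ contained in at least $1 + \lceil (n-1)/|S_i| \rceil$ subsets (counting $S_i$ and its share of the others), hence $r \ge 1 + (n-1)/|S_i|$; and separately every element of $S_i$ gives $|S_i| \le r$ is \emph{not} needed if instead I also run the symmetric count — honestly the slickest writeup is the double-counting inequality $\frac{r(r-1)}{2} \cdot n \ge \frac{n(n-1)}{2} \cdot$ wait. Let me just commit to: use $\sum_s \binom{d_s}{2} \ge \binom n2$ and $d_s \le r$ plus $\sum_s d_s \le rn$ (since there are $n$ subsets, no — since $\sum_s d_s = \sum_i |S_i| \le n \cdot \max_i|S_i|$, and $\max_i |S_i| \le n$). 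The clean finish: $\binom n 2 \le \sum_s \binom{d_s}2 \le n\binom r2$, giving $n-1 \le r(r-1) < r^2$, hence $r > \sqrt{n-1}$, and a final integrality check upgrades this to $r \ge \sqrt n$. That integrality check is the one routine wrinkle I would verify carefully.
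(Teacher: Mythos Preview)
Your final committed argument---the double-counting inequality $\binom{n}{2} \le \sum_{s\in S}\binom{d_s}{2} \le n\binom{r}{2}$---is exactly the paper's proof. The only difference is the last step: where you pass to $r^2 > n-1$ and plan to invoke integrality, the paper simply rewrites $r(r-1)\ge n-1$ as $r^2 \ge n-1+r \ge n$ (using $r\ge 1$), which yields $r\ge\sqrt{n}$ directly and spares you the integrality wrinkle.
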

\begin{proof}
	Consider $\sum_{i<j} \mid S_i\cap S_j\mid$. 
	
	Since $\mid S_i\cap S_j\mid \ge 1$ and number of such pairs is $C_n^2$, $\sum_{i<j} \mid S_i\cap S_j\mid\ge C_n^2 = \frac{n(n-1)}{2}$.
	On the other hand, $C_n^2 \leq \sum_{i<j} \mid S_i\cap S_j\mid = \sum_{s\in S}\mid\{(i,j): i<j, s\in S_i \cap S_j\}\mid \leq \sum_{s\in S} C_r^2 = n\cdot \frac{r(r-1)}{2}$ since each element $s$ belongs to no more than $r$ subsets.
	
	So, 
	
	$$
	r(r-1)\ge n-1
	$$
	$$
	r^2 \ge n-1 + r \ge n
	$$
	
	$$
	r \ge \sqrt{n}
	$$
	
\end{proof}

Within the boundaries of a constrained approach, we assign edges to partitions based on a mapping $V \to S$ for any set of partitions $S$ with $\mid S\mid \leq \mid V\mid$. Theorem \ref{th:repl_bound} provides an upper bound for the replication factor of such a partition, which is lesser or equal than $r$, with $r \ge \sqrt{n}$.

We aim to demonstrate that, for the complete graph $K_{m} = (V, E)$, any partitioning method is unable to significantly enhance the bound of $\sqrt{n}$. Suppose we have an edge partition $E=E_{1} \cup ... \cup E_n$, and we define $\alpha$ as the load balance, which can be calculated 
as $\alpha$ = $\min_{i}\frac{\mid E_{i}\mid\cdot n}{\mid E\mid}$.

\begin{theorem}
	
	Let us consider an edge partition of a\\ complete graph $K_m = (V, E)$ into $n$ parts $E_1 \cup \dots \cup E_n$ with a load balance of $\alpha$, defined
	as $\alpha = \min_{i} \frac{\mid E_{i}\mid \cdot n}{\mid E\mid}$.\\
	The replication factor of the partition can be bounded as follows: $RF \geq \sqrt{\alpha} \cdot \sqrt{n} \cdot \sqrt{\frac{m-1}{m}}$.
\end{theorem}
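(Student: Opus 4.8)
The plan is to reduce this statement to Theorem~\ref{th:repl_bound} by a counting argument analogous to the one used there, but now keeping track of vertices and the load-balance parameter $\alpha$. First I would fix notation: let $V_i = V(E_i)$ be the set of endpoints of edges in part $E_i$, and let $r_v = |\{i : v \in V_i\}|$ be the number of parts in which vertex $v$ appears, so that $RF = \frac{1}{m}\sum_{v \in V} r_v$. The key observation is that each part $E_i$ is a set of edges on the vertex set $V_i$, hence $|E_i| \le \binom{|V_i|}{2} \le \frac{|V_i|^2}{2}$. Combined with the load-balance hypothesis $|E_i| \ge \alpha \frac{|E|}{n} = \alpha \frac{m(m-1)}{2n}$, this forces $|V_i|^2 \ge \alpha \frac{m(m-1)}{n}$, i.e. $|V_i| \ge \sqrt{\alpha}\sqrt{\frac{m(m-1)}{n}}$ for every $i$.

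Next I would sum over $i$: $\sum_{i=1}^n |V_i| \ge n \sqrt{\alpha}\sqrt{\frac{m(m-1)}{n}} = \sqrt{\alpha}\sqrt{n}\sqrt{m(m-1)}$. On the other hand $\sum_{i=1}^n |V_i| = \sum_{v \in V} r_v = m \cdot RF$ by swapping the order of summation. Putting the two together gives $m \cdot RF \ge \sqrt{\alpha}\sqrt{n}\sqrt{m(m-1)}$, and dividing by $m$ yields $RF \ge \sqrt{\alpha}\sqrt{n}\sqrt{\frac{m(m-1)}{m^2}} = \sqrt{\alpha}\sqrt{n}\sqrt{\frac{m-1}{m}}$, which is exactly the claimed bound.

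I expect the argument to be essentially routine; the only point requiring care is the inequality $|E_i| \le \binom{|V_i|}{2}$, which is where the completeness of $K_m$ is actually used only implicitly — the bound holds for any graph, but for $K_m$ we additionally know $|E| = \binom{m}{2}$, which is what introduces the $\sqrt{(m-1)/m}$ factor rather than a clean $\sqrt{\alpha n}$. The main subtlety to state carefully is therefore the chain $|V_i| \ge \sqrt{\alpha}\sqrt{m(m-1)/n}$: one must make sure the load-balance definition $\alpha = \min_i \frac{|E_i| n}{|E|}$ is applied to the right part and that no rounding (from non-perfect-square $n$ or from $\binom{|V_i|}{2}$ versus $|V_i|^2/2$) weakens the bound below what is claimed. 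Since the statement already absorbs the $\binom{|V_i|}{2} \le |V_i|^2/2$ slack into the $\sqrt{(m-1)/m}$ factor, no further loss occurs, and the proof closes cleanly.
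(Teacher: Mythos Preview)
Your proof is correct and follows essentially the same route as the paper: both arguments bound $|V(E_i)|$ from below via $|E_i|\le\binom{|V(E_i)|}{2}\le|V(E_i)|^2/2$, invoke the load-balance definition to get $|E_i|\ge\alpha|E|/n$, sum over $i$, and divide by $m=|V|$. The only cosmetic difference is that the paper introduces auxiliary balance parameters $b_i=|E_i|/(|E|/n)$ before bounding each $\sqrt{b_i}\ge\sqrt{\alpha}$, whereas you apply the $\alpha$-bound directly; the substance is identical.
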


\begin{proof}
	
	By definition the replication factor is\\
	 $$
	 RF = \frac{\sum_i \mid V(E_{i})\mid}{\mid V\mid}.
	 $$
	
	Let's evaluate the number of vertices $V(E_{i})$ in the partition $E_{i}$. In any graph with $k$ vertices, the number of edges is bounded by $\mid E_{i}\mid \leq \frac{k(k-1)}{2}$. Therefore, we can derive the inequality $k(k-1) \geq 2\mid E_{i}\mid$. Therefore for each i $\mid V(E_{i})\mid \ge \sqrt{2\mid E_{i}\mid}$.
	
	If we consider a complete graph with $m$ vertices, the total number of edges in the graph is given by $\frac{m(m-1)}{2}$. Therefore, the average number of edges in each partition can be calculated as $\frac{m(m-1)}{2n}$.
	
	Let us define the balance parameter $b_{i} = \frac{\mid E_{i}\mid}{\mid E\mid/n}$. We can rewrite it as $\mid E_{i}\mid = \frac{\mid E\mid}{n} \cdot b_{i}$.
	
	Then $\mid V(E_{i})\mid \ge \sqrt{\frac{2\mid E\mid}{n}\cdot b_{i}}$. Let us consider RF:
	
	\begin{equation*}
	\begin{split}
	\frac{\sum_i \mid V(E_i)\mid}{\mid V\mid}\ge \frac{1}{m} \sqrt{\frac{2\mid E\mid}{p}}(\sqrt{b_1} + ... + \sqrt{b_n})
	\ge \\
	\ge \frac{1}{m} \sqrt{\frac{2\mid E\mid}{n}} \sqrt{\alpha} n\\
	\end{split}
	\end{equation*}
	
	For the complete graph $\mid E\mid = \frac{m(m-1)}{2}$ and therefore:
	
	\begin{equation*}
	\begin{split}
	\frac{\sum_i \mid V(E_i)\mid}{\mid V\mid}\ge \frac{1}{m} \sqrt{\frac{2m(m-1)}{2n}} \sqrt{\alpha}n
	=\\
	= \sqrt{\frac{m-1}{m}} \sqrt{\alpha} \sqrt{n}
    \end{split}
    \end{equation*}

	In practice, this implies that when aiming for a balance close to 1 and dealing with a large number of vertices, it is not feasible to achieve a replication factor ($RF$) significantly less than $\sqrt{n}$. In order to reduce the replication factor, one have to sacrifice balance by assigning a majority of the edges to a single partition.
	
\end{proof}

\begin{corollary}
	Let's denote by $RF(m)$ the lower bound of the replication factor for any partition into $n$ parts with a load balance $\alpha$ for a complete graph with $m$ vertices. As the number of vertices $m$ approaches infinity, the lower bound $RF(m)$ converges to $\sqrt{\alpha} \cdot \sqrt{n}$.
\end{corollary}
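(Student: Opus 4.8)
The plan is to read $RF(m)$ as the explicit lower bound furnished by the preceding theorem, namely $RF(m) = \sqrt{\alpha}\cdot\sqrt{n}\cdot\sqrt{\frac{m-1}{m}}$, and then pass to the limit. First I would rewrite the $m$-dependent factor as $\sqrt{\frac{m-1}{m}} = \sqrt{1 - \frac{1}{m}}$, isolating the constant $\sqrt{\alpha}\cdot\sqrt{n}$ which does not depend on $m$. Next, since $\frac{1}{m}\to 0$ as $m\to\infty$, we have $1 - \frac{1}{m}\to 1$; invoking continuity of $x\mapsto\sqrt{x}$ at $x=1$ gives $\sqrt{1 - \frac{1}{m}}\to 1$. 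Finally, multiplying by the fixed constant $\sqrt{\alpha}\cdot\sqrt{n}$ and using that the limit of a product is the product of the limits yields $RF(m)\to\sqrt{\alpha}\cdot\sqrt{n}$, as claimed.

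Since every step is an elementary limit computation, there is essentially no obstacle: the corollary is a restatement of the theorem in asymptotic form, emphasizing that the $\sqrt{\frac{m-1}{m}}$ correction is negligible for large graphs. The only point deserving care is the \emph{interpretation} of ``the lower bound $RF(m)$''. If instead one wished to read it as the true infimum of the replication factor over all $\alpha$-balanced partitions of $K_m$ into $n$ parts, then proving that this infimum actually \emph{converges} to $\sqrt{\alpha}\cdot\sqrt{n}$ — rather than merely remaining above a quantity that tends to $\sqrt{\alpha}\cdot\sqrt{n}$ — would additionally require exhibiting a family of partitions of $K_m$ whose replication factors approach $\sqrt{\alpha}\cdot\sqrt{n}$ as $m$ grows. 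That matching construction would be the hard part; I would expect the FPP partitioner of Section \ref{seq:FPP}, applied to $K_m$, to supply it, since the bound on $\mid V(E_i)\mid$ coming from the number of points on a line is of exactly the right order. As the statement as phrased concerns only the lower bound from the theorem, I would not pursue this and would simply take that expression as the definition of $RF(m)$, making the corollary the one-line limit above.
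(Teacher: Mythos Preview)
Your proposal is correct and matches the paper's approach: the corollary is stated without separate proof, so it is intended as the immediate limit of the theorem's bound $\sqrt{\alpha}\cdot\sqrt{n}\cdot\sqrt{\frac{m-1}{m}}$ as $m\to\infty$, exactly as you compute. Your discussion of the alternative interpretation (true infimum versus the explicit bound) is a sound caveat that the paper does not raise.
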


\section{FPP Partition algorithm}\label{seq:FPP}

In our method, the partition is based on the structure of a projective plane. The set $S$ represents the projective plane, where each point corresponds to an index of a partition, and the subsets $S_i$ are the lines on the plane. The number of subsets equals the total number of partitions, denoted as $n$, which can be calculated as $n=p^{2k}+p^k+1$, where $p$ is a prime number. It is important to note that the intersection of any two subsets $S_i$ contains exactly one point.

Each vertex in the graph is associated with a specific subset through a mapping $\psi:V\to{S_i}$. Furthermore, all subsets consist of $r=p^k+1$ points, ensuring that no vertex is replicated more than $p^k+1$ times. It is worth mentioning that $p^k+1$ is approximately equal to $\sqrt{p^{2k}+p^k+1}=\sqrt{n}$.

This lower bound on replication factor, \\
approximately equal to $\sqrt{n}$, indicates that it cannot be further improved. Thus, the proposed method achieves a replication factor close to its lower bound.

\subsection{Partitioning algorithm description}

\begin{figure}
	\centering
	\includegraphics[width=0.45\textwidth]{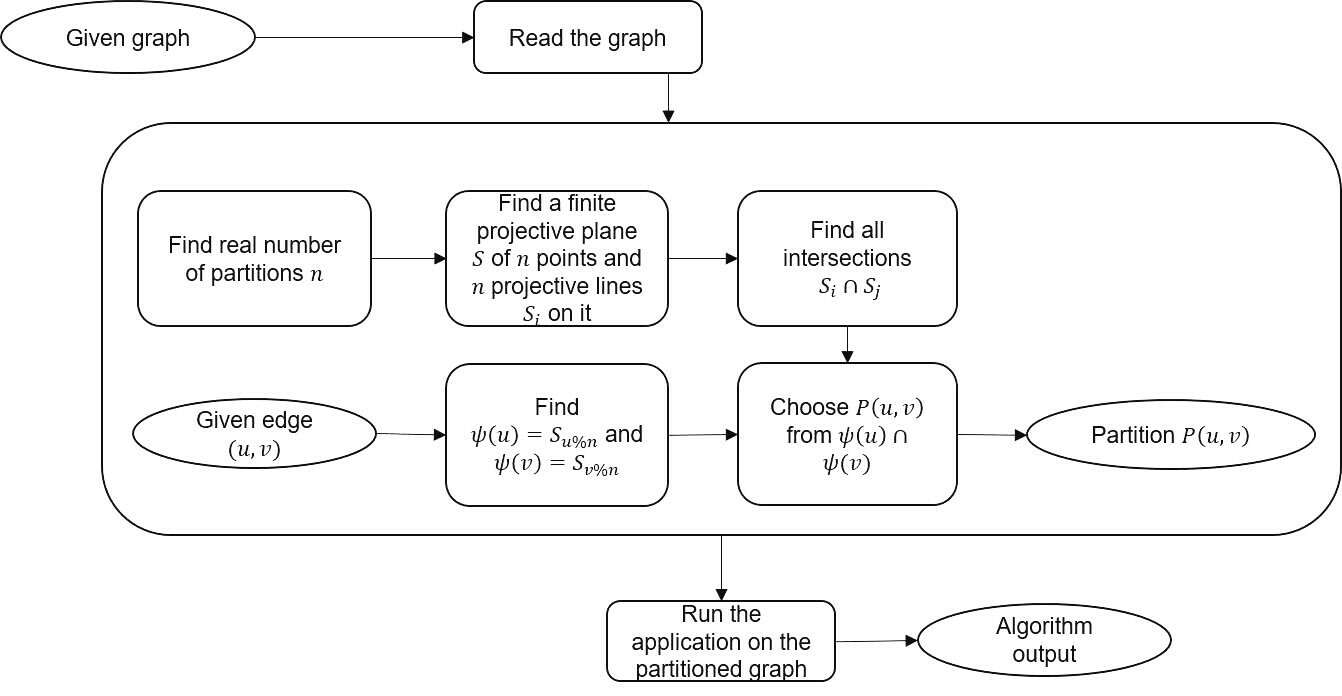}
	\caption{\label{fig:flow}Flow of FPP partition algorithm.}
\end{figure}

Our method operates as follows:

\begin{enumerate}
	\item
	Receive the set of edges and the number of partitions $n = p^{2k} + p^k + 1$.
	\item
	Form the set $S$ of points in the projective plane, where each point is represented as an array with 3 elements from the finite field $F$ of order $p$.
	\item
	Represent each projective line as a 2-dimensional subspace in a 3-dimensional vector space over the finite field $F$.
	\item
	Define each line by a point $u$, which consists of all points $v$ such that $u \cdot v = 0$, where the scalar product is defined over $F$.
	\item
	Find and store the intersections of subsets.
	\item
	Construct the function $\psi: V \to {1, \ldots, n}$ as the remainder of division by $n$: $\psi(v) = v \mod n$.
	\item
	Denote the number of partitions to which an edge $e = {u, v}$ is assigned as $P(u, v)$.
	Assign each edge to a partition according to the following rule: $P(u, v) = i$, where $i \in S_{\psi(u)} \cap S_{\psi(v)}$.
	\item
	Since $\mid S_i \cap S_j\mid = 1$ for all $i \neq j$, such assignment is always possible.
\end{enumerate}

For the case $\psi(v) = \psi(u)$, we need to choose a partition index from $S_{\psi(v)}$ which has $p^k + 1$ elements. This method will be discussed in detail in subsection \ref{dfpp}.

A flowchart of the algorithm is illustrated in Figure \ref{fig:flow}.

\subsection{Theoretical bound}

FPP partition algorithm guarantees the upper bound of replication factor $\sqrt{n}$ which can't be improved in the general case.

\begin{theorem}
	Consider a graph $G = (V, E)$ with $n = p^{2k} + p^k + 1$ for some prime number $p$ and non-negative integer $k$. Let $RF$ denote the replication factor of a partition into $n$ parts using the FPP (Finite Projective Plane) partition approach. Then $RF \leq RF_{max}(n)$, where $RF_{max}(n) \approx \sqrt{n}$.
	
\end{theorem}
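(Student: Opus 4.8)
The plan is to bound the replication factor directly from the structure of the finite projective plane. Recall that in the FPP partition, each vertex $v$ is mapped to a line $S_{\psi(v)}$ of the projective plane of order $p^k$, and every such line contains exactly $r = p^k + 1$ points. Since the edges incident to $v$ can only be assigned to partitions corresponding to points lying on the line $S_{\psi(v)}$, the vertex $v$ is replicated across at most $p^k + 1$ partitions. First I would formalize this: for each vertex $v$, $v$ appears in partition $E_i$ only if $i \in S_{\psi(v)}$, hence $|\{i : v \in V(E_i)\}| \le |S_{\psi(v)}| = p^k + 1$.

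Next I would sum this over all vertices. We have
\begin{equation*}
\sum_{i=1}^{n} |V(E_i)| = \sum_{v \in V} |\{i : v \in V(E_i)\}| \le \sum_{v \in V} (p^k + 1) = (p^k+1)\cdot |V|,
\end{equation*}
so that $RF = \frac{1}{|V|}\sum_i |V(E_i)| \le p^k + 1$. It then remains to relate $p^k + 1$ to $\sqrt{n}$. Since $n = p^{2k} + p^k + 1$, we have $\sqrt{n} = \sqrt{p^{2k} + p^k + 1}$, and a short estimate shows $p^k < \sqrt{n} < p^k + 1$, so $RF \le p^k + 1 = RF_{max}(n) \approx \sqrt{n}$; one can make the approximation precise by noting $RF_{max}(n) - \sqrt{n} = (p^k+1) - \sqrt{p^{2k}+p^k+1} \to \tfrac12$ as $p^k \to \infty$, which justifies the $\approx$ sign in the statement.

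There is essentially one subtlety to address rather than a hard obstacle: the case $\psi(u) = \psi(v)$, where both endpoints of an edge land on the same line, so the rule "pick $i \in S_{\psi(u)} \cap S_{\psi(v)}$" does not single out a point. Here the edge must be assigned to one of the $p^k+1$ points of that common line; I would simply observe that whatever tie-breaking rule subsection \ref{dfpp} specifies, it still assigns the edge to a point \emph{on} $S_{\psi(u)}$, so the bound $|\{i : v \in V(E_i)\}| \le |S_{\psi(v)}|$ is unaffected. Thus the argument goes through verbatim, and the only genuinely ``hard'' part — showing the bound cannot be improved in general — is already supplied by Theorem \ref{th:repl_bound}, which forces $r \ge \sqrt{n}$ for any constrained scheme; combined with the upper bound just derived, this shows the FPP construction is optimal up to the additive gap between $p^k+1$ and $\sqrt{n}$.
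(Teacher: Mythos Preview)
Your proposal is correct and follows essentially the same approach as the paper: bound each vertex's replication by the line size $p^k+1$ via the containment $\{i : v \in V(E_i)\} \subseteq S_{\psi(v)}$, then sandwich $p^k+1$ between $\sqrt{n}$ and $\sqrt{n}+1$. Your version is in fact slightly more explicit than the paper's (you write out the double-counting sum and handle the $\psi(u)=\psi(v)$ tie-breaking case), but the underlying argument is identical.
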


\begin{proof}
	
	Each vertex in the partitioning is associated with a specific subset through the mapping $\psi: V \to {S_i}$. Consequently, the partition number for an edge ${u,v}$ is chosen from the intersection of the subsets $S_{\psi(u)}$ and $S_{\psi(v)}$. This implies that each edge incident to vertex $v$ can only be assigned to partitions from $S_{\psi(v)}$. Furthermore, since all subsets consist of $p^k+1$ points, it follows that no vertex can be replicated more than $p^k+1$ times.
	
	It is worth noting that		\\
	$p^k = \sqrt{p^{2k}+p^k+1 - p^k-1} = \sqrt{n-p^k - 1} \leq \sqrt{n}$.
	
	On the other hand, \\
	$p^k + 1 = \sqrt{p^{2k} + 2p^k + 1} = \sqrt{n + p^k} \geq \sqrt{n}$.
	
	Hence, we have \\
	$\sqrt{n} \leq p^k + 1 \leq \sqrt{n} + 1$.
	
\end{proof}

In the proof, we assumed that the number of partitions is given by the formula $n = p^{2k} + p^k + 1$ for a prime number $p$. However, if the desired number of partitions does not match this formula, we can still adapt the method by considering the largest number $n' < n$ that satisfies the equation.

In this case, the excess partitions, i.e., the difference $n - n'$, can be handled in two ways. First, they can be left empty, resulting in $n'$ as the actual number of partitions. Alternatively, these excess partitions can be used with any suitable distribution to ensure a balanced partitioning. 

\subsection{Determined FPP partition algorithm} {\label{dfpp}}

If $\psi(u)$ and $\psi(v)$ represent the same projective line, in the standard variant of FPP Partition, we would choose a partition randomly from this line. However, we can eliminate this randomness by constructing a one-to-one mapping $\phi: {S_i} \to S$. This mapping ensures that we select one point from each set of points on the line in a deterministic manner.

To establish this mapping, we create a bipartite graph $\overline{G}=(V, U, E), V\cap U = \emptyset, E\subseteq \{\{u,v\}, u\in U, v\in V\}$, where $V$ represents the projective points and $U$ represents the projective lines. An edge from $E$ exists between two vertices, $v \in V$ and $u \in U$, if and only if the corresponding point $v$ lies on the corresponding line $u$ in the projective plane.

In a bipartite graph, a \textit{matching} refers to a subset of edges in the graph where no two edges share a common vertex. A \textit{perfect matching} is matching that covers all vertices in the graph, meaning that every vertex is incident to exactly one edge in the matching.

A one-to-one mapping $\phi : {S_i} \to S$ that associates each line with a point can be established by finding a perfect matching in the bipartite graph $\overline{G}$.

We can find a perfect matching in the bipartite graph $\overline{G}$ using Kuhn's algorithm, as described in \cite{kuhn}. It is important to note that both parts of the graph have equal size $n = p^{2k} + p^k + 1$, and all vertices have the same degree $p^k + 1$. By applying Kuhn's algorithm, we can guarantee the existence of a perfect matching in the graph. This property follows by Kőnig's theorem \cite{konig1916}, which states that in a bipartite graph with equal-sized parts and vertices of the same degree, a perfect matching always exists.

This perfect matching in $\overline{G}$ allows us to construct the desired mapping $\phi$. If $\psi(u) = \psi(v)$, then the edge $(u,v)$ is assigned to the partition $\phi(\psi(u)) = \phi(\psi(v))$. Since $\phi(\psi(u)) \in \psi(u)$, this assignment does not violate the theoretical upper bound of the FPP Partition. 

The image of mapping $\phi$ covers all partitions, which ensures that the partition remains balanced. Additionally, the replication factor for the determined FPP is slightly reduced compared to the regular FPP. This reduction occurs because all edges $(u,v)$ where $\psi(u) = \psi(v)$ are assigned to the same partition.

\subsection{Example}

\begin{figure}
	\centering
	\includegraphics[width=0.35\textwidth]{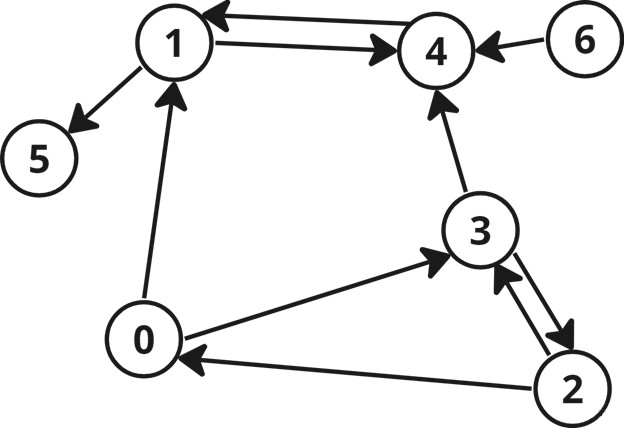}
	\caption{\label{fig:graph} Example: graph.}
\end{figure}

Consider the graph shown in Figure \ref{fig:graph}.

Let's construct a finite projective plane over the field $\mathbf{Z}_2$. In this construction, every homogeneous linear equation with coefficients $(u_1:u_2:u_3)$ defines a line in the projective plane.

\begin{figure}
	\centering
	\includegraphics[width=0.2\textwidth]{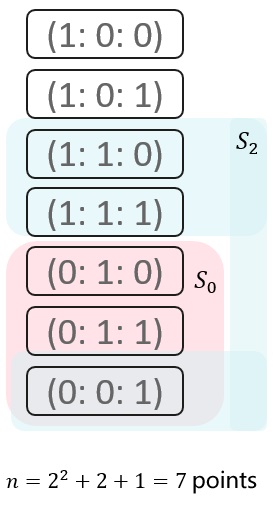}
	\caption{\label{fig:lines} Example of FPP partition.}
\end{figure}

\begin{equation*} \label{eq1}
	\begin{split}
		u_1=0, S_0 & = \{(0:1:0), (0:1:1), (0:0:1)\} \\
		u_1+u_3=0, S_1 & = \{(1:0:1),(1:1:1),(0:1:0)\} \\
		u_1+u_2=0, S_2 & = \{(1:1:0), (1:1:1),(0:0:1)\} \\
		u_1+u_2+u_3=0, S_3 & = \{(1:0:1), (1:1:0), (0:1:1)\} \\
		u_2=0, S_4 & = \{(1:0:0),(1:0:1), (0:0:1)\} \\
		u_2+u_3=0, S_5 & = \{(1:0:0),(1:1:1), (0:1:1)\} \\
		u_3=0, S_6 & = \{(1:0:0),(1:1:0),(0:1:0)\}
	\end{split}
\end{equation*}

Picture \ref{fig:lines} shows the intersections of subsets $S_i$ for the graph from Figure \ref{fig:graph}.	

Let's define $P(u, v)$ as the partition number assigned to the edge $(u, v)$.

\begin{equation*} \label{eq2}
	\begin{split}
		S_0\cap S_1 & = \{(0:1:0)\} \Longrightarrow P(0, 1)=1 \\
		S_0 \cap S_3 & = \{(0:1:1)\} \Longrightarrow P(0, 3)=2 \\
		S_1 \cap S_5 & = \{(1:1:1)\} \Longrightarrow P(1, 5)=6 \\
		S_1\cap S_4 & = \{(1:0:1)\} \Longrightarrow P(1, 4)=P(4, 1)=4 \\
		S_0\cap S_2 & = \{(0:0:1)\} \Longrightarrow P(2, 0)=0 \\
		S_2\cap S_3 & = \{(1:1:0)\} \Longrightarrow P(2, 3)=P(3, 2)=5 \\
		S_3\cap S_4 & = \{(1:0:1)\} \Longrightarrow P(3, 4)=4 \\
		S_4\cap S_6 & = \{(1:0:0)\} \Longrightarrow P(6, 4)=3
	\end{split}
\end{equation*}

The replication factor can be calculated as the average number of partitions to which each vertex belongs. In this case, the replication factor is $\frac{3+3+2+3+2+1+1}{7} \approx 2.14$, which is less than $\sqrt{7}$.

\section{Empirical results}\label{seq:empirical_results}
\subsection{Metrics calculations}

We analyzed two metrics, balance and replication factor (RF), and observed their improvements in Table \ref{tab:metrics}. It is evident that as the number of partitions increases, the replication factor should also increase (with RF equal to 1 for a single partition). Therefore, we present the metrics values for a fixed number of partitions, specifically $651 = 25^{2 \cdot 1} + 25^1 + 1$ for twitter-2010, uk-2002 and com-friendster and $381 = 19^2 + 19 + 1$ for arabic-2005 and graph500-24, graph500-26.

We selected EdgePartition2D from GraphX as the baseline algorithm for comparison.

One can see that replication factor of the partition made by FPP method is approximately 2 times lower, though balance is worse in some cases.

\begin{table}[h]
	\centering
	\begin{tabular}{c c c c c}
		graph & \# parts & partition & balance & RF \\
		& & algorithm & & \\\hline
		uk-2002 & 651 & Edge2D & 1.02 & 21.77 \\
		& & FPP & 1.31 &  10.41\\\hline
		twitter-2010 & 651 & Edge2D & 1.08 & 22.76\\
		& & FPP & 1.26 & 11.59  \\\hline
		com-friendster & 651 & Edge2D & 1.01 & 21.97 \\
		& & FPP & 1.01 & 11.27 \\\hline
		arabic-2005 & 381 & Edge2D & 1.05 & 20.39 \\
		& &  FPP & 1.25 & 10.94 \\\hline
		graph500-24 & 381 & Edge2D & 1.07 & 13.82 \\
		& &  FPP & 1.08 & 7.11 \\\hline
		graph500-26 & 381 & Edge2D & 1.03 & 13.56 \\
		& & FPP & 1.06 & 6.97 \\\hline
	\end{tabular}
	\caption{\label{tab:metrics}A metrics table.}
\end{table}

\subsection{Performance improvement}

Our implementation of FPP Partition was also written on Spark. Some optimizations were made for partition process in both partitioning methods, our and baseline, that made possible reducing partition time itself.

We considered as target algorithm Connected Components algorithm implemented inner on GraphX, and also PageRank.

We identified the optimal settings for each graph partition algorithm and target algorithm. The tables present a comparison of the best results obtained using EdgePartition2D and FPP partitions with the determined version of the algorithm.

It is important to note that the performance of these algorithms is influenced by the cluster configuration, settings, and the number of partitions. We conducted tests to observe how the runtime varies with an increasing number of partitions. The results are visualized in the graphics shown in Figures \ref{fig:graphics1} and \ref{fig:graphics2}.

\begin{figure}[h]
	\centering
	\includegraphics[width=0.45\textwidth]{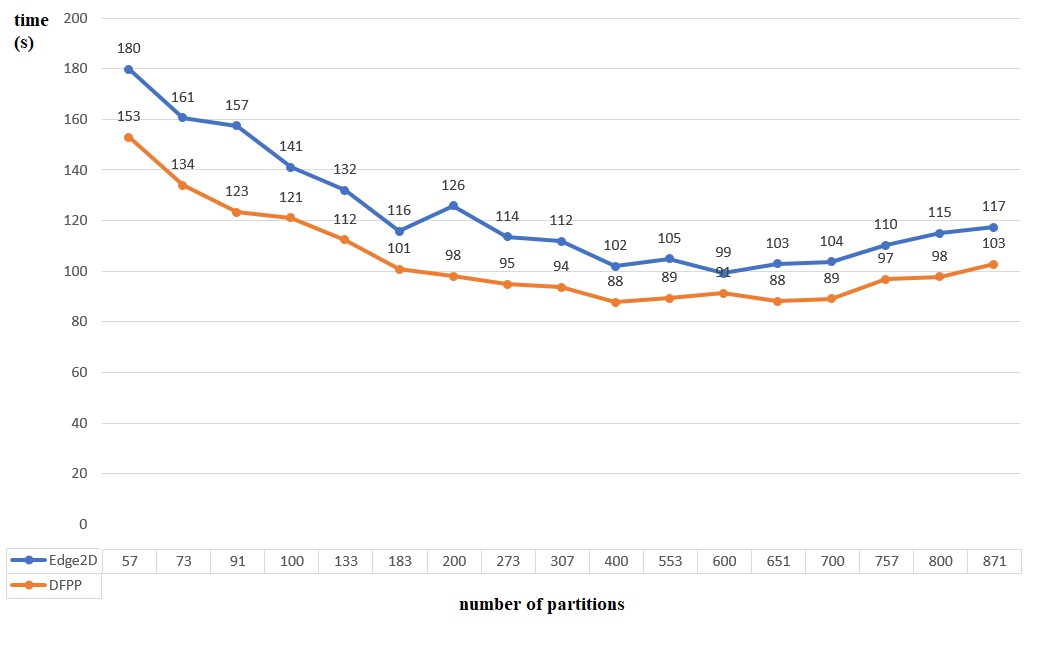}
	\caption{\label{fig:graphics1} Connected components algorithm runtime using Edge2D and DFPP partitions for graph twitter-2010}
\end{figure}

\begin{figure}[h]
	\centering
	\includegraphics[width=0.45\textwidth]{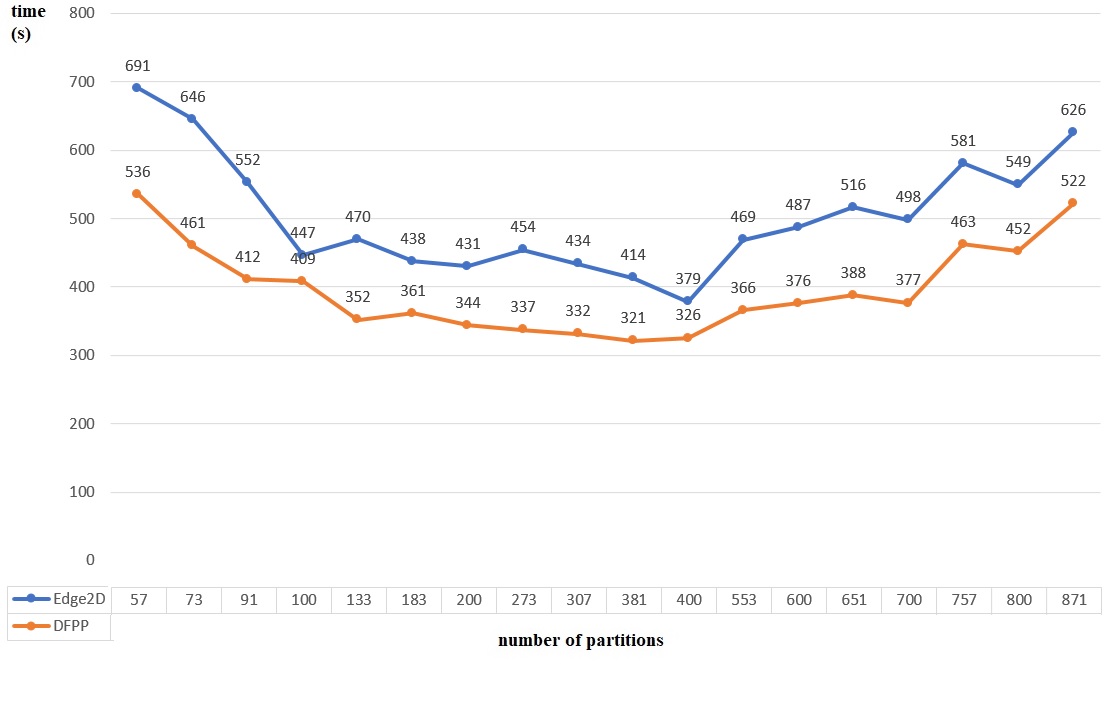}
	\caption{\label{fig:graphics2} PageRank algorithm runtime using Edge2D and DFPP partitions for graph graph500-24}
\end{figure}

It is evident from the results that different partition algorithms have varying optimal numbers of partitions. Therefore, a comparison of partition algorithms was conducted using the respective optimal number of partitions for each algorithm.

So, for example, in the case of accelerating Connected Components on graph datasets twitter-2010 and com-friendster, we used a total of 651 partitions for the FPP partition algorithm, which corresponds to $25^2 + 25 + 1$. For the EdgePartition2D algorithm, we selected 700 partitions for the twitter-2010 graph and 400 partitions for the com-friendster graph, as these settings yielded the best results.

The experiments were conducted on a 4-node \\ Apache Spark cluster comprising 1 master node (driver) and 3 worker nodes. The cluster utilized Kunpeng \\ servers with a total of 96 cores running at 2.6 MHz, 384 GiB of RAM, and a 10Gb/s network.


The runtime in seconds for each algorithm and configuration is provided in Tables \ref{tab:timesPR} and \ref{tab:times}. The improvement ratio in Tables \ref{tab:timesPRImp} and \ref{tab:timeImp} was calculated as the ratio of the runtime for EdgePartition2D (Edge2D) to the runtime for FPP: 
$$
improv. = time_{Edge2D} / time_{FPP}.
$$

The algorithm time improvement is calculated in a similar way: 
$$
alg.time\ improv. = alg.time_{Edge2D} / alg.time_{FPP}.
$$

\begin{table}[h]
	\centering
	\begin{tabular}{c c c c c c c c c}
		
		graph & partition& \# & part. & alg. & end-to \\
		& alg. & parts & time & time & -end t. \\\hline
		
		twitter & Edge2D & 600 & 35.9 & 99.2 & 135.1 \\
		-2010 & FPP & 651 & 34.9 & 87.9 & 122.8 \\\hline
		com- & Edge2D & 500 & 50.6 & 217.3 & 267.8 \\
		friendster & FPP & 651 & 43.5 & 155.5 & 199.0 \\\hline
		uk & Edge2D & 273 & 11.3 & 60.0 & 71.4 \\
		-2002 & FPP & 273 & 11.3 & 51.2 & 62.5 \\\hline
		arabic & Edge2D & 381 & 17.3 & 68.3 & 85.6 \\
		-2005 & FPP & 381 & 17.75 & 62.5 & 80.2 \\\hline
		graph500 & Edge2D & 381 & 27.9 & 39.4  & 67.3 \\
		-26 & FPP & 381 & 27.5 & 35.7  & 63.2 \\\hline
	\end{tabular}
	\caption{\label{tab:timesPR}A runtime (in seconds) table for Connected Components.}
\end{table}

\begin{table}[h]
	\centering
	\begin{tabular}{c c c c c c c c}
		
		graph & alg.time & end-to-end \\
		& improvement & improvement \\\hline
		
		twitter-2010 & 1.13x & 1.10x\\\hline
		com-friendster& 1.40x & 1.35x\\\hline
		uk-2002 & 1.17x & 1.14x \\\hline
		arabic-2005 & 1.09x & 1.07x \\\hline
		graph500-26 & 1.10x & 1.07x \\\hline
	\end{tabular}
	\caption{\label{tab:timesPRImp}Runtime improvement table for Connected Components.}
\end{table}

\begin{table}[h]
	\centering
	\begin{tabular}{c c c c c c c}
		graph & partition& \# & part. & alg. & end-to \\
		& alg. & parts & time & time & -end t. \\\hline
		
		uk-2002 & Edge2D & 200 & 15.5 & 778.8 & 794.3 \\
		& FPP & 307 & 17.0 & 584.2 & 601.2 \\\hline
		graph & Edge2D & 400 & 13.9 & 378.5 & 392.4 \\
		500-24 & FPP & 381 & 40.3 & 321.2 & 335.5 \\\hline
	\end{tabular}
	\caption{\label{tab:times}A runtime (in seconds) table for PageRank.}
\end{table}

\begin{table}[h]
	\centering
	\begin{tabular}{c c c c c c c}
		graph & alg.time  & end-to-end \\
		& improvement & improvement  \\\hline
		
		uk-2002 & 1.33x & 1.32x \\\hline
		graph500-24 & 1.18x & 1.17x \\\hline
	\end{tabular}
	\caption{\label{tab:timeImp}Runtime improvement table for PageRank.}
\end{table}

\section{Conclusion}\label{seq:conclusion}

The proposed algorithm has demonstrated significant performance improvement for the target algorithms on graph data. The partition quality, as measured by the replication factor, has also been improved compared to the baseline algorithm. The theoretical bound of the replication factor achieved by the constrained approach proves its effectiveness.

In the context of a pipeline where the graph is partitioned before each application run, our method offers a relatively fast partitioning process, making it well-suited for such scenarios. However, it can also be applied in situations where partitioning is performed once and the application is executed multiple times.

Overall, the algorithm presents a promising solution for \\ graph partitioning, offering both performance acceleration and improved replication factor, making it applicable in various computational scenarios.







\bibliographystyle{elsarticle-num}

\begin{thebibliography}{00}
	
	
	\bibitem{jkarl}{Johan Kahrlstrom, On projective planes, Bachelor's thesis at Mid Sweden University", 2002}
	
	\bibitem{adwise}{
		Christian Mayer and Ruben Mayer and Muhammad Adnan Tariq and Heiko Geppert and Larissa Laich and Lukas Rieger and Kurt Rothermel,
		ADWISE: Adaptive Window-based Streaming Edge Partitioning for High-Speed Graph Processing,
		IEEE 38th International Conference on Distributed Computing Systems (ICDCS),
		pp. 685--695,
		2018,
		https:/doi.org/10.1109/ICDCS.2018.00072
	}
	\bibitem{powerlyra}{
		Rong Chen and Jiaxin Shi and Yanzhe Chen and Binyu Zang and Haibing Guan and Haibo Chen,
		PowerLyra: Differentiated Graph Computation and Partitioning on Skewed Graphs,
		ACM Transactions Parallel Computations,
		vol. 5,
		3,
		pp. 39,
		2019,
		https://doi.org/10.1145/3298989
	}
	\bibitem{dbh}{
		Cong Xie and Ling Yan and Wu-Jun Li and Zhihua Zhang,
		Distributed Power-law Graph Computing: Theoretical and Empirical Analysis,
		Advances in Neural Information Processing Systems,
		vol. 27,
		2014
	}
	\bibitem{hep}{
		Ruben Mayer, Hans-Arno Jacobsen,
		Hybrid Edge Partitioner: Partitioning Large Power-Law Graphs under Memory Constraints,
		Proceedings of the 2021 International Conference on Management of Data (SIGMOD),
		2021,
		https://doi.org/10.48550/arXiv.2103.12594
		
	}
	
	\bibitem{packing}{
		J.W.P. Hirschfeld, L. Storme,
		The packing problem in statistics, coding theory and finite projective spaces,
		Journal of Statistical Planning and Inference,
		vol. 72,
		pp. 355–380,
		1998,
		https://doi.org/10.1007/978-1-4613-0283-4\_13
	}
	\bibitem{graphx}{
		GraphX: Graph processing in a distributed dataflow framework,
		Gonzalez, Joseph E and Xin, Reynold S and Dave, Ankur and Crankshaw, Daniel and Franklin, Michael J and Stoica, Ion,
		11th USENIX Symposium on Operating Systems Design and Implementation (OSDI 14),
		USENIX Association,
		2014
	}
	\bibitem{graphx_cost}{
		Rohit Kumar and Alberto Abell'o and Toon Calders,
		Cost Model for Pregel on GraphX,
		Advances in Databases and Information Systems. ADBIS 2017. Lecture Notes in Computer Science(),
		vol. 10509,
		pp. 153-166,
		2017,
		https://doi.org/10.1007/978-3-319-66917-5\_11
	}
	\bibitem{ne}{
		Chenzi Zhang and Fan Wei and Qin Liu and Zhihao Gavin Tang and Zhenguo Li,
		Graph Edge Partitioning via Neighborhood Heuristic,
		Proceedings of the 23rd ACM SIGKDD International Conference on Knowledge Discovery and Data Mining (KDD’17),
		pp. 605–614,
		2017,
		https://doi.org/10.1145/3097983.3098033
	}
	\bibitem{hdrf}{
		Fabio Petroni and Leonardo Querzoni Khuzaima Daudjee and Shahin Kamali and Giorgio Iacoboni,
		HDRF: Stream-Based Partitioning for Power-Law Graphs,
		Proceedings of the 24th ACM International on Conference on Information and Knowledge Management,
		pp. 243-252,
		2015
	}
	\bibitem{torus}{
		Nilesh Jain and Guangdeng Liao and Theodore L. Willke,
		GraphBuilder: Scalable Graph ETL Framework,
		First International Workshop on Graph Data Management Experiences and Systems,
		vol. 4,
		pp. 1-6,
		2013,
		https://doi.org/10.1145/2484425.2484429
	}
	\bibitem{kuhn}{
		Harold W. Kuhn,
		The Hungarian Method for the assignment problem,
		Naval Research Logistics Quarterly,
		vol. 2,
		pp. 83–97,
		1955,
		https://doi.org/10.1002/nav.3800020109
	}
	
	\bibitem{konig1916}{
		Kőnig, Dénes,
		Über Graphen und ihre Anwendung auf Determinantentheorie und Mengenlehre,
		Mathematische Annalen (language German),
		On Graphs and Their Applications to the Theory of Determinants and Set Theory,
		Mathematische Annalen (Title translated from German),
		vol. 77,
		4,
		pp. 453--465,
		1916,
		https://doi.org/10.1007/BF01456961
	}
\end{thebibliography}



\end{document}